\newtheorem{theorem}{Theorem}
\newtheorem{proposition}{Proposition}
\newtheorem{lemma}{Lemma}
\theoremstyle{definition}
\newcommand{\bdm}{\begin{displaymath}}
\newcommand{\edm}{\end{displaymath}}
\newcommand{\bq}{\begin{equation}}
\newcommand{\eq}{\end{equation}}
\newcommand{\bqn}{\begin{equation*}}
\newcommand{\eqn}{\end{equation*}}
\newcommand{\N}{{\mathbb N}}
\newcommand{\C}{{\mathbb C}}
\newcommand{\R}{{\mathbb R}}
\renewcommand{\epsilon}{\varepsilon}
\renewcommand{\phi}{\varphi}
\renewcommand{\rho}{\varrho}
\newcommand{\norm}[1]{\left\lVert#1\right\rVert}
\newcommand{\g}{{\bf \mathfrak g}}
\renewcommand{\k}{{\bf \mathfrak k}}
\renewcommand{\a}{{\bf\mathfrak a}}
\newcommand{\n}{{\bf\mathfrak n}}
\newcommand{\p}{{\bf \mathfrak p}}
\newcommand{\Hom}{\rm{Hom}}
\newcommand{\Ad}{\mathrm{Ad}\,}
\newcommand{\ad}{\mathrm{ad}\,}
\begin{document}

\author{Gang Liu, Aprameyan Parthasarathy}
\title[Domains of holomorphy]{Domains of holomorphy for irreducible admissible uniformly bounded Banach representations of simple Lie groups}
\address{Gang Liu, Institut {\'E}lie Cartan de Lorraine, Universit{\'e} de Lorraine, Ile du Saulcy,
57045 Metz, France.}
\email{gang.liu@univ-lorraine.fr}
\address{ Aprameyan Parthasarathy, Institut f\"ur Mathematik, Universit\"at Paderborn, Warburger Stra{\ss}e 100, 33098 Paderborn, Germany.}
\email{apram@math.upb.de}
\subjclass{}
\keywords{}
\thanks{The authors would like to thank Bernhard Kr\"otz for introducing the subject to them, and for helpful correspondence.}

\begin{abstract}
In this note, we address a question raised by  Kr\"otz on the classification of domains of holomorphy of irreducible admissible Banach representations for connected non-compact simple real Lie groups $G$. When $G$ is not of Hermitian type, we give a complete description of the domains of holomorphy for irreducible admissible \emph{uniformly bounded} representations on uniformly convex uniformly smooth Banach spaces and, in particular, for all irreducible uniformly bounded Hilbert representations. When the group $G$ is Hermitian, we determine the domains of holomorphy only when the representations considered are highest or lowest weight representations.
\end{abstract}

\maketitle

\section{Introduction}
 Let $G$ be a simple, non-compact, connected real Lie group with Lie algebra $\g$ and let $K$ be a maximal compact subgroup of $G$. Further, let $G_{\C}$ be the universal complexification of $G$ and $K_{\C}$, that of $K$. We can assume, without loss of generality, that $G \subseteq G_{\C}$ and that $G_{\C}$ is simply connected (see \cite[Remark $5.2$]{kroetz08}). Let $(\pi,V)$ be a Banach representation of $G$, i.e. assume there is a continuous action 
 \bqn
 G \times V \longrightarrow V, \qquad (g,v) \mapsto g\cdot v, \qquad g \in G, v \in V
 \eqn
of $G$ on a Banach space $V$ which gives rise to a group homomorphism $g \mapsto \pi(g)$ with $\pi(g)v:= g\cdot v$. For much of this introductory material \cite[Chapters $1$, $3$]{wallach1}) is a good reference. 
We call a vector $v \in V$ an \emph{analytic vector} if the \emph{orbit map} $\gamma_v: G \longrightarrow V$ of $v$, given by $g\longmapsto \pi(g)v$ and a priori continuous, extends to a holomorphic ($V$-valued) function on an open neighbourhood of $G$ in $G_{\C}$ or equivalently, if $\gamma_v$ is a ($V$-valued) real analytic map. Note that the space $V^{\omega}$ of analytic vectors for the representation $(\pi,V)$ is a $G$-invariant subspace which is dense in $V$. 
Recall that a Banach representation $(\pi, V)$ is called \emph{admissible} if $\dim \Hom_K(W,V_K)< \infty$ for any finite-dimensional $K$-module $W$, where $V_K$ is the space of $K$-finite vectors of $(\pi, V)$. If $\pi$ is irreducible and admissible, then we know that $K$-finite vectors are necessarily analytic vectors i.e., $V_K \subseteq V^{\omega}$.  So, given a non-zero vector $v\in V_K$, one might ask: to which natural domain in $G_{\C}$ does its orbit map $\gamma_v$ extend holomorphically. A first remark is that it is not unreasonable to expect that such a domain  would be independent of the vector $v \in V_K$ because $\mathcal{U}(\g_{\C})\cdot v=V_K$ as \,$\mathcal{U}(\g_{\C})$-modules. Here $\mathcal U(\g_{\C})$ is the universal enveloping algebra of $\g$. 
In fact, in \cite[Theorem 5.1]{kroetz08} Kr\"otz proved a classification of such domains when $\pi$ is an irreducible unitary representation and, further, proposed the following generalisation. 
 
\emph{Conjecture}: 
\label{thm: main theorem}
 Let $(\pi, V)$ be an irreducible admissible Banach representation of $G$. Given $0 \neq v \in V_K$,  there exists a unique maximal $G\times K_{\C}$-invariant domain $D_{\pi} \subset G_{\C}$ such that the orbit map $\gamma_v: g \mapsto \pi(g)v$ extends to a holomorphic map $\widetilde{\gamma}_v: D_{\pi} 
\longrightarrow V$. In more detail, we have
 \begin{itemize}
 \item[i)] $D_{\pi}=G_{\C}$ if $\pi$ is finite-dimensional.
 \item[ii)] $D_{\pi}=\widetilde{\Xi}^+$ if $G$ is Hermitian, and $\pi$ is a highest weight representation.
 \item[iii)] $D_{\pi}=\widetilde{\Xi}^-$ if $G$ is Hermitian, and $\pi$ is a lowest weight representation.
 \item[iv)] $D_{\pi}=\widetilde{\Xi}$\;\; in all other cases.
 \end{itemize}

Here we have written $\widetilde{\Xi}=q^{-1}(\Xi)$, $\widetilde{\Xi}^{\pm}=q^{-1}(\Xi^{\pm})$, where $q:G_{\C} \longrightarrow G_{\C}/K_{\C}$ is the canonical projection, $\Xi$ is the so-called \emph{crown domain} and $\Xi^+$, $\Xi^-$ are related domains in $\mathbb{X}_{\C}=G_{\C}/K_{\C}$ containing the Riemannian symmetric space $\mathbb{X}=G/K$.
Notice that for a finite-dimensional representation $\pi$, the fact that $D_{\pi}=G_{\C}$ follows directly from the definitions. Henceforth, all the representations that we consider will be infinite-dimensional. We also remark here that the conjecture is to be viewed as a complex-geometric description of the admissible dual of $G$. 

Now a Banach space $V$ is called \emph{uniformly convex} if given $\epsilon>0$ there exists a $\delta(\epsilon)>0$ such that $\frac{\norm{v+w}}{2}\leq 1-\delta(\epsilon)$ whenever $\norm{v}, \norm{w}\leq 1$ and $\norm{v-w}\geq \epsilon$. Further, a Banach space $V$
is called \emph{uniformly convex and uniformly smooth}, abbreviated henceforth as \emph{ucus}, if both $V$ and its continuous dual $V^{\ast}$ are uniformly convex (see \cite[Section $2$.a]{bfgm07}, for instance). We remark that Hilbert spaces as well as $\mathrm{L}^p$-spaces, $1<p<\infty,$ belong to this class. 

\hspace{-1em} Further, recall that a Banach representation $(\pi, V)$ of $G$ is said to be \emph{uniformly bounded} if there exists a constant $C>0$ such that 
\bq 
\label{eqn: uniform boundedness}
\norm{\pi(g)}_{op}\leq C \qquad \forall \, g\in G.
\eq
Here $\norm{\cdot}_{op}$ denotes the operator norm on the space of bounded linear operators on $V$. A Banach representation $\pi$ is called \emph{isometric} if $\norm{\pi(g)v}=\norm{v}$, $\forall \,g\in G$, $\forall \,v\in V$, where $\norm{\cdot}$ denotes the norm on $V$. Uniformly bounded Hilbert representations have been classically well-studied in the context of harmonic analysis on semisimple Lie groups (Kunze-Stein phenomenon, $(\g, K)$-module cohomology etc.). This is true also for representations on $\mathrm{L}^p$-spaces, while on general ucus Banach spaces, to our knowledge, such representations have been studied in the context of Property (T) and rigidity. 

\textbf{Remark}: \label{rem: equiv norm} Given a uniformly bounded Banach representation $(\pi, V)$, we can endow $V$ with the equivalent norm $\norm{v}_{\mathrm{isom}}:=\sup_{g\in G}\norm{\pi(g)v}$, $v\in V$, so that $\pi$ is isometric with respect to the new norm $\norm{\cdot}_{\mathrm{isom}}$.  
Of course, isometric representations are uniformly bounded representations for which the constant $C$ in \eqref{eqn: uniform boundedness} is $1$.

In this note, we show that for an infinite-dimensional
irreducible admissible uniformly bounded representation $\pi$ of a non-compact simple Lie group $G$, which is not of Hermitian type, on a ucus Banach space, the associated domain of holomorphy $D_{\pi}$ is the domain $\widetilde \Xi$ mentioned earlier in the section. We wish to point out that since a non-trivial irreducible, uniformly bounded Hilbert representation is necessarily admissible (see \cite[Theorem 5.2, Chapter IV]{borel-wallach}), our main result Theorem \ref{thm: non-Hermitian}, gives in particular the domains of holomorphy for all irreducible uniformly bounded Hilbert representations. We further emphasise that the class of irreducible uniformly bounded Hilbert representations is a much larger class than the class of irreducible unitary representations.
If $G$ is Hermitian and $\pi$ is a highest (respectively, lowest weight) representation of $G$, then we show that $D_{\pi}=\widetilde \Xi^+$, (respectively, $D_{\pi}=\widetilde \Xi^-$). It is not yet clear to us how to handle the case when $G$ is Hermitian but $\pi$ is neither a highest nor a lowest weight representation. In the unitary setting of \cite{kroetz08}, an $SL(2,\R)$-reduction is used, together with the uniqueness  of the direct integral decomposition for unitary representations. Since there is no analogous result on the uniqueness of direct integral decompositions in the Banach space setting, such a method does not extend.  
However, since the class of groups which are not Hermitian is a very large one, and since uniformly bounded representations are an important class, our results, while not complete, are, in our opinion, already of interest. 

Finally, our methods are an extension of the circle of ideas developed in \cite{kroetz-stanton04}, \cite{kroetz-opdam08}, and \cite{kroetz08}. An important point in the proof is that the holomorphic extension of the orbit map $\gamma_v$ of a non-zero $K$-finite vector $v$ depends essentially on the smooth Fr{\'e}chet structure of the Casselman-Wallach globalisation of the underlying $(\g, K)$-module. Another key observation is that from the vanishing at infinity of the matrix coefficients for non-trivial irreducible, uniformly bounded representations,  one can derive the appropriate properness of the $G$-action, which then leads to the determination of the desired domain of holomorphy. 

\section{Complex geometric setting}
\label{Section: complex geometry}
We begin by briefly describing the complex geometric setting, and refer to \cite{kroetz-stanton05}, \cite{kroetz-opdam08} for comprehensive accounts. With $G$ and $K$ as before, let $\g=\k\oplus \p$ be a Cartan decomposition such that $K$ is the analytic subgroup corresponding to $\k$, and let $\a$ be a maximal abelian subspace of $\p$. Set $\widehat{\Omega}:=\{Y \in \p \, |\, \text{spec}(\ad(Y))\subseteq (-\frac{\pi}{2}, \frac{\pi}{2})\}$, and $\Omega=\widehat{\Omega}\cap \a$, which is invariant under the Weyl group $W=W(\g,\a)$ associated to the set of restricted roots $\Sigma(\g,\a)$ of the pair $(\g,\a)$. Then we define the domain $\widetilde{\Xi}:=G\exp(i\Omega)K_{\C} \subset G_{\C}$ and, thereby, the so-called \emph{elliptic model} of the crown domain by $\Xi=\widetilde{\Xi}/K_{\C}$. It is known that $\Xi$ is a Stein domain admitting a proper $G$-action. Notice that $\mathbb{X}\subset \Xi\subset \mathbb{X}_{\C}$. Now, if we define the set of elliptic elements in $\mathbb X_{\C}$ by $\mathbb{X}_{\C,\;\textrm{ell}}:=G\exp(i\a)K_{\C}/K_{\C}$, then it is known that the crown domain $\Xi$ is the maximal domain contained in $\mathbb{X}_{\C,\;\textrm{ell}}$ which admits a proper $G$-action. However, $\Xi$ is \emph{not} a maximal domain in all of $\mathbb{X}_{\C}$ which admits a proper $G$-action. This is related to an alternative description of the crown domain - the so-called \emph{unipotent model}, and thence to the domains $\Xi^{\pm}$. Fixing an order on the restricted root system $\Sigma(\g,\a)$, let $\g=\k\oplus\a\oplus\n$  be the Iwasawa decomposition, and set $\Lambda$ to be the connected component of $\{Y\in \n\, |\, \exp(iY)K_{\C}/K_{\C}\in \Xi\}$ containing $0$. Then, as described in \cite[Section 8]{kroetz-opdam08}, we have the unipotent model $\Xi=G\exp(i\Lambda)K_{\C}/K_{\C}$ of the crown domain.  This model enables one to have a precise understanding of  the boundary of $\Xi$ which, in turn, then allows for a description of the directions in which the crown domain $\Xi$ can be extended to obtain a domain $D$ in such a way that it still admits a proper $G$-action. Denote by  
$\mathrm{\partial}\Xi$ the topological boundary of the crown domain $\Xi$. The \emph{elliptic} part 
$\mathrm{\partial}_{\textrm{ell}}\Xi$ of the boundary is then given by $\partial_{\textrm{ell}}\Xi=G\exp(i\mathrm{\partial}\, \Omega)K_{\C}$, and we define the \emph{unipotent} part of the boundary to be $\partial_u\Xi:=\partial\, \Xi\setminus \partial_{\textrm{ell}}\Xi$. Indeed, it can be seen that $\partial_u\Xi=G\exp(i\partial\Lambda)K_{\C}/K_{\C}$. Since the $G$-stabiliser of any point in $\partial_{\textrm{ell}}\Xi$ is a non-compact subgroup of $G$, it follows that for any $G$-invariant domain $D$  with $\mathbb X \subseteq D \subseteq \mathbb X_{\C}$ on which $G$ acts properly, we have  that $D\cap \partial_{\textrm{ell}}\Xi=\emptyset$. Further, $\partial_u\Xi\not\subset D$, and so if $D\not\subset \Xi$, then
$D \cap \partial_u\Xi\neq \emptyset$ (in fact, $D$ intersects the so-called \emph{regular} part of 
$\partial_u \Xi$).

For later use, we also mention that a simple real Lie group is called \emph{Hermitian} if the corresponding symmetric space $G/K$ admits the structure of a complex manifold. In this case, as a $\k_{\C}$-module, $\p_{\C}$ splits into irreducible components $\p_{\C}^+$, and $\p_{\C}^-$, and we let $\mathsf{P}^{\pm}$ denote the corresponding analytic subgroups of $G_{\C}$. Then it can be seen that $\widetilde \Xi^{\pm}=GK_{\C}\mathsf{P}^{\pm}$. 

\section{Holomorphic extensions of irreducible admissible representations}
\label{Section: holomorphic extensions}
In this section, we relate the complex geometric setting discussed above to irreducible admissible $G$-representations. The first important observation in this direction is the following result on the holomorphic extension of the orbit map of a non-zero $K$-finite vector to the $G\times K_{\C}$-invariant domain $\widetilde{\Xi}$.  We present the essential ideas of the proof, building on the proof of \cite[Theorem $3.1$]{kroetz-stanton04}. Note that admissibility is a crucial assumption in what follows. 

\begin{theorem}
\label{thm: crown domain}
Let $(\pi, V)$ be an irreducible, admissible Banach representation of a connected, non-compact, simple Lie group $G$. 
If $0\neq v \in V_K$ is a $K$-finite vector, 
then the orbit map $\gamma_v: G \longrightarrow V$ extends to a $G$-equivariant 
holomorphic map on $\widetilde{\Xi}=G\exp(i\Omega)K_{\C}$. 
\end{theorem}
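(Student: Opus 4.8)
The plan is to follow the strategy of \cite[Theorem $3.1$]{kroetz-stanton04}, reducing the holomorphic extension problem to a statement about the convergence of a power series built from the $\g$-action on $v$. Since $\widetilde{\Xi} = G\exp(i\Omega)K_{\C}$ and the desired extension is to be $G$-equivariant and $K_{\C}$-invariant (on the right, in the appropriate sense), it suffices by the $G$-equivariance to produce, for each $Y \in \Omega$, a holomorphic extension of the map $t \mapsto \pi(\exp(tY))v$ from a neighbourhood of $[0,1] \subset \R$ to a neighbourhood of the segment $\{\exp(isY) : s \in [0,1]\}$; equivalently, to show that the $V$-valued real-analytic map $g \mapsto \pi(g)v$, already known to be real-analytic since $v \in V^{\omega}$ (as $\pi$ is irreducible admissible, so $V_K \subseteq V^{\omega}$), extends across $\exp(iY)$ for $Y \in \Omega$. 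The key point, as emphasised in the introduction, is that this extension depends only on the $(\g,K)$-module structure together with the smooth Fréchet globalisation: one works inside $V^{\infty}$, the space of smooth vectors, with its Fréchet topology given by the seminorms $v \mapsto \norm{\pi(u)v}$, $u \in \mathcal{U}(\g_{\C})$.

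Concretely, I would first fix a $K$-type containing $v$ and use that $v$ is $\k$-finite. Writing $Y \in \Omega$, the formal orbit map is $\exp(tY)\cdot v = \sum_{n\geq 0} \frac{t^n}{n!}\pi(Y)^n v$; one wants to substitute $t = is$ and control $\sum_n \frac{s^n}{n!}\norm{\pi(Y)^n v}$ in a Fréchet seminorm for $s$ near $1$. The condition $\mathrm{spec}(\ad Y) \subseteq (-\tfrac{\pi}{2},\tfrac{\pi}{2})$ is precisely what is needed to bound the growth of $\pi(Y)^n v$ after decomposing $Y$-action via the adjoint action on a suitable finite-dimensional space carrying $v$ and its $\mathcal{U}(\g_{\C})$-translates. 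More precisely, following \cite{kroetz-stanton04}, one decomposes $\exp(tY) = k(t)\exp(a(t))n(t)$ — or rather uses the $K_{\C}A_{\C}N_{\C}$ structure after complexification — and the decisive estimate is that the $K_{\C}$-component stays in a compact set and the $\mathfrak{a}$- and $\mathfrak{n}$-parts contribute factors controlled by $\mathrm{spec}(\ad Y)$; the admissibility hypothesis guarantees that the relevant $K$-isotypic projections of $\pi(u)v$ lie in finite-dimensional spaces, so the estimates are uniform. Assembling these gives a holomorphic $V$-valued map on a neighbourhood of $G\exp(i\Omega)$ in $G_{\C}$, and right $K_{\C}$-invariance (hence extension to all of $\widetilde{\Xi}$) follows since $K$-translates of $v$ again lie in the finite-dimensional $K$-span and the estimates persist.

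The main obstacle is the Banach (not Hilbert) setting: the spectral decomposition of $\pi(Y)$ that makes the Hilbert-space argument transparent is unavailable, so one must instead run the estimate purely at the level of the $(\g,K)$-module $V_K$ and its Casselman–Wallach completion $V^{\infty}$, using that $V_K$ is finitely generated over $\mathcal{U}(\g_{\C})$ and that the $\mathfrak{a}$-weights (infinitesimal character data) are fixed. Thus the real work is to show the formal power series for $\pi(\exp(iY))v$ converges in the Fréchet topology of $V^{\infty}$ — this is where the condition $Y \in \Omega$ enters — and then to invoke that $V^{\infty} \hookrightarrow V$ continuously to land back in $V$. I expect that, modulo carefully transcribing the estimates of \cite{kroetz-stanton04} (which are already essentially representation-theoretic rather than Hilbertian), the argument goes through with only notational changes; the genuinely new input — vanishing of matrix coefficients and properness — is reserved for identifying $\widetilde{\Xi}$ as the \emph{maximal} domain, which is not claimed here.
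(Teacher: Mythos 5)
Your reduction to the smooth globalisation is exactly the paper's first step: one works in $V^\infty$ with its Casselman--Wallach Fr\'echet topology (this is where admissibility enters) and uses that this topology is finer than the one induced from $V$, so that holomorphy there implies holomorphy in $V$; your remark about invoking $V^\infty \hookrightarrow V$ captures this correctly. But the core of your argument --- summing the series $\sum_n \frac{s^n}{n!}\norm{\pi(Y)^n v}$ and asserting that the growth of $\pi(Y)^n v$ is ``controlled by $\mathrm{spec}(\ad Y)$'' --- is a genuine gap. The condition $\mathrm{spec}(\ad Y)\subseteq(-\frac{\pi}{2},\frac{\pi}{2})$ constrains the \emph{adjoint} representation, not $\pi$, and for an abstract admissible Banach representation there is no mechanism by which it bounds $\norm{\pi(Y)^n v}$. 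Moreover, even granting convergence, a power-series argument only yields an extension on some ball around the identity; reaching all of $G\exp(i\Omega)K_{\C}$ is precisely the hard part of \cite[Theorem 3.1]{kroetz-stanton04}, and their proof is not a power-series estimate.

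The missing ingredient is Casselman's submodule theorem: the paper embeds $(\pi^\infty,V^\infty)$ as a closed $G$-submodule of a smooth minimal principal series $\mathcal H^{\infty}_{\tau,\lambda}$, and it is only in that concrete realisation that the $K_{\C}A_{\C}N_{\C}$ structure you allude to becomes usable --- there the orbit map is given explicitly in terms of the Iwasawa projections $g\mapsto a(g^{-1}k)$ and $g\mapsto \kappa(g^{-1}k)$, which are known to extend holomorphically over the crown, while $K$-finiteness lets the compact-picture function extend to $K_{\C}$. Once $v$ is placed inside $\mathcal H^{\infty}_{\tau,\lambda}$, the Kr\"otz--Stanton computation applies essentially verbatim and the Hilbert/Banach distinction becomes irrelevant, which is why the paper's proof is short. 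Your instinct that the estimates are ``representation-theoretic rather than Hilbertian'' is right, but without the embedding into the principal series you have no concrete model in which to run them.
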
 
\begin{proof}

Let $V^\infty$ be the subspace of smooth vectors in $V$. Then $V_K\subset V^\infty$ and it is clear that $\gamma_v(G)\subset V^\infty$. 
Now, by the admissibility of $\pi$,  $V^\infty$ equipped with the topology induced by the universal enveloping algebra $\mathcal{U}(\g_{\C})$
becomes the (smooth) Casselman-Wallach  globalisation of the Harish-Chandra module $V_K$ (See \cite{casselman89}, \cite[Chapter $11$]{wallach2} or see \cite{bernstein-kroetz14} for a different approach).
On the other hand, since the topology on $V^\infty$ as a Casselman-Wallach globalisation 
is finer than the topology on it induced from $V$, we only need to prove that the orbit map  $\gamma_v: G \longrightarrow V^\infty$ extends to a $G$-equivariant  map on $\widetilde{\Xi}=G\exp(i\Omega)K_{\C}$ which is holomorphic with respect to the topology of
the Casselman-Wallach globalisation $(\pi^\infty, V^\infty)$. 

Now, by Casselman's submodule theorem, $(\pi^\infty, V^\infty)$ is embedded (as a closed $G$-submodule) 
into a smooth principal series representation 
$(\pi^{\infty}_{\tau, \lambda}, \mathcal H^{\infty}_{\tau,\lambda})$ 
arising from a minimal parabolic subgroup $P$ of $G$. 
In this way, we can assume that $v\in V^\infty\subseteq H^{\infty}_{\tau,\lambda}$. 
Then we can follow the argument in \cite[Theorem 3.1]{kroetz-stanton04} in order to conclude 
that $\gamma_v: G \longrightarrow V^\infty$ extends to a $G$-equivariant 
holomorphic map on $\widetilde{\Xi}$.
\end{proof}

This theorem tells us that the domains of holomorphy that we seek necessarily contain the domain $\widetilde{\Xi}$. The question then is whether such domains can be larger than \,$\widetilde \Xi$, and if so, to understand the connection between the geometry of the domains and representation theory. This crucial link is established by using the vanishing property of matrix coefficients at infinity of the irreducible, admissible representations under consideration, and relating this to properness of the $G$-action. We can then use the finer group theoretic structure according to whether $G$ is Hermitian or non-Hermitian to establish precisely what the sought-after domains of holomorphy are. Here is the result on vanishing at infinity of matrix coefficients that we need.

\begin{proposition}
\label{prop: vanishing at infty}
Let $(\pi, V)$ be an infinite-dimensional, uniformly bounded, irreducible, admissible representation of $G$ on a ucus Banach space. Then all the matrix coefficients of $\pi$ vanish at infinity.
\end{proposition}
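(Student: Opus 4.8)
The plan is to run a Howe--Moore type argument, adapted from unitary representations to uniformly bounded representations on \emph{ucus} (hence reflexive) Banach spaces. \emph{First, reductions.} Since $V_K$ is dense in $V$ and $\norm{\pi(g)}_{op}\le C$, it suffices to prove $\langle\pi(g_n)v,\mu\rangle\to0$ for $v\in V_K$, $\mu\in V^{\ast}$ and every sequence $g_n\to\infty$ in $G$ (the general case follows by approximating $v$ by $K$-finite vectors and using the uniform bound). Writing $g_n=k_n\exp(H_n)k_n'$ via the Cartan decomposition ($k_n,k_n'\in K$, $H_n\in\overline{\a^+}$), one has $\norm{H_n}\to\infty$, and along a subsequence $k_n\to k$, $k_n'\to k'$, $H_n/\norm{H_n}\to H$ with $\norm{H}=1$; replacing $v$ by $\pi(k')v\in V_K$ and using strong continuity of $\pi$ yields $\pi(g_n)v-\pi(k_n)\pi(\exp H_n)v\to0$ in norm. (Making the passage to $K$-limits rigorous requires knowing that $\pi^{\ast}|_K$ is strongly continuous on $V^{\ast}$; this follows by first replacing the norm by its $L^2(K)$-average -- still \emph{ucus}, with $\pi$ still uniformly bounded -- and then invoking the Radon--Riesz property of the uniformly convex space $V^{\ast}$. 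This is the one place where uniform smoothness, rather than mere reflexivity, is used.)

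\emph{Producing a nonzero vector fixed by a unipotent subgroup.} Suppose the conclusion fails, so $\langle\pi(g_n)v,\mu\rangle\to c\neq0$ along a subsequence. Since $\norm{\pi(\exp H_n)v}\le C\norm v$ and $V$ is reflexive, pass to a further subsequence with $\pi(\exp H_n)v\rightharpoonup w_0$; then $\pi(k_n)\pi(\exp H_n)v\rightharpoonup\pi(k)w_0$ by the previous step, so $\langle\pi(k)w_0,\mu\rangle=c\neq0$ and hence $w_0\neq0$. This is exactly where the hypothesis bites: non-decay of the matrix coefficient is what keeps the weak limit nonzero. Let $U=U_H\subseteq N$ be the horospherical subgroup expanded by $\mathrm{Ad}(\exp tH)$ as $t\to+\infty$; it is nontrivial (as $H\in\overline{\a^+}\setminus\{0\}$) and contains a root subgroup $\exp(\g_\gamma)$ for some positive restricted root $\gamma$. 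For $u\in U$, setting $w_n':=\exp(-H_n)u\exp(H_n)\to e$, one gets $\pi(u)\pi(\exp H_n)v-\pi(\exp H_n)v=\pi(\exp H_n)\bigl(\pi(w_n')v-v\bigr)\to0$ in norm, by continuity of $\pi$ at $e$ together with $\norm{\pi(\exp H_n)}_{op}\le C$; hence both sequences have weak limit $w_0$, so $\pi(u)w_0=w_0$. Thus $w_0\neq0$ is fixed by $U$, in particular by $\exp(\g_\gamma)$.

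\emph{Finishing, and the main obstacle.} It now suffices to rule out, in an irreducible, admissible, \emph{infinite-dimensional} representation of the simple group $G$, a nonzero vector fixed by a nontrivial unipotent subgroup. Morally this is a Mautner/simplicity bootstrap: invariance of $w_0$ under $\exp(\g_\gamma)$ should propagate inside the rank-one subgroup $L_\gamma\cong SL(2,\R)$ (or $PSL(2,\R)$) to $L_\gamma$-invariance, hence to invariance under a nontrivial element $a\in A$; the Mautner phenomenon (contraction estimate as above, needing only uniform boundedness) then propagates it to the two opposite horospherical subgroups of $a$, which generate $G$ since $G$ is simple; so $w_0$ is $G$-fixed and $\C w_0$ is a one-dimensional closed $G$-invariant subspace, which is impossible. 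The hard part is the first link, ``$w_0$ fixed by one unipotent root subgroup $\Rightarrow$ $w_0$ fixed by $G$'': a naive re-run of the weak-limit argument stalls because the new weak limits may vanish, and no soft argument rules this out.

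\emph{Role of admissibility.} This is where admissibility is essential. By Casselman's subrepresentation theorem, the space of smooth vectors annihilated by $\n$ is finite-dimensional, so the commuting action of $A$ on it has a genuine eigenvector, and uniform boundedness forces that action to be semisimple with unitary eigenvalues. One then analyses such an $\n$-annihilated, $A$-eigenvector $w_1$: its matrix coefficients are right-$N$-invariant and transform under the right $A$-action by a unitary character, and continuity of these coefficients on $G$ forces the eigencharacter to be trivial (otherwise the value on a non-open Bruhat cell would be ill-defined unless $w_1=0$); a short $\mathfrak{sl}_2$-module computation then shows $w_1$ generates either a one-dimensional $G$-invariant subspace or a proper nonzero closed $G$-submodule, contradicting irreducibility either way. (An alternative, purely representation-theoretic route would bypass Howe--Moore altogether: by the Harish-Chandra--Casselman-Mili\v{c}i\'c asymptotic expansions the $K$-finite matrix coefficients of $\pi$ are controlled along $\overline{\a^+}$ by finitely many exponents, and uniform boundedness together with infinite-dimensionality excludes the only exponent that could obstruct decay -- but carrying this out requires a case analysis of the exponents under the uniform-boundedness constraint.)
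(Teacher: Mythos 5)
Your proposal takes a genuinely different route from the paper, but it has a real gap. The paper does not prove this proposition from scratch: it quotes Borel--Wallach (Chapter IV, Theorem 5.4) for uniformly bounded Hilbert representations and Shalom's Howe--Moore theorem for isometric representations on ucus spaces (as in Bader--Furman--Gelander--Monod, Theorem 9.1), and passes from uniformly bounded to isometric by the renorming remark. You instead attempt a direct Howe--Moore argument. Your first two steps are essentially sound for uniformly bounded representations on reflexive spaces: the reduction to $K$-finite vectors, the Cartan-decomposition bookkeeping (with the strong continuity of the dual $K$-action), and the weak-limit/Mautner computation producing a nonzero $w_0$ fixed by the horospherical subgroup expanded by $\exp(tH)$ all go through, since they only use $\norm{\pi(g)}_{op}\leq C$ and reflexivity.

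The gap is exactly where you say the argument gets hard, and the ``role of admissibility'' paragraph does not close it. First, $w_0$ is merely a weak limit of $\pi(\exp H_n)v$ in $V$; nothing makes it a smooth vector, so finiteness statements about smooth vectors annihilated by $\n$ cannot be applied to it --- and the finiteness you invoke is in any case a statement about the Jacquet module (Casselman--Osborne) or about $N$-fixed distribution vectors, not the content of Casselman's subrepresentation theorem. Second, the claim that continuity of a right-$N$-invariant, right-$A$-eigen matrix coefficient forces the (unitary) eigencharacter to be trivial is unsupported: such a continuous function is perfectly well defined on every Bruhat cell, open or not, for any character, so no contradiction arises this way. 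Third, the concluding ``short $\mathfrak{sl}_2$-module computation'' is not carried out, and its second alternative (``$w_1$ generates a proper nonzero closed $G$-submodule'') is no contradiction with irreducibility unless you actually prove the closure is proper. In short, the propagation from ``fixed by one root subgroup'' to a genuine contradiction is precisely the hard half of the Howe--Moore theorem in the non-unitary setting --- it is the content of the Borel--Wallach and Shalom results the paper cites --- and your proposal establishes only the easy half. The parenthetical alternative via Casselman--Mili\v{c}i\'c asymptotic expansions is indeed a viable admissibility-based strategy (closer in spirit to the Hilbert-space proof in the literature), but as you note it is not carried out here.
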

\begin{proof}
For the case of uniformly bounded Hilbert representations, see \cite[Theorem $5.4$]{borel-wallach}). For isometric representations on general \emph{ucus} Banach spaces the result is due to Shalom and can be found, for instance, in \cite[Theorem $9.1$]{bfgm07}. By Remark \ref{rem: equiv norm}, the vanishing of matrix coefficients at infinity for uniformly bounded representations follows from the isometric case.
\end{proof}

We now use this proposition  to prove
the following theorem on the properness of the $G$-action.

\begin{theorem}
\label{thm: proper action}
Let $G$ be a non-compact, simple real Lie group and let  $(\pi, V)$ be an infinite-dimensional, uniformly bounded, irreducible  representation of $G$ on a ucus Banach space. Then $G$ acts properly on $D=\widetilde D/K_{\C}$, for any maximal $G\times K_{\C}$-invariant domain $\widetilde D\subset G_{\C}$ to which the orbit map $\gamma_v$ of a non-zero $K$-finite vector $v$ extends holomorphically. 
\end{theorem}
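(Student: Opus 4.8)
The plan is to translate the vanishing of matrix coefficients at infinity (Proposition~\ref{prop: vanishing at infty}) into properness of the $G$-action on $D$ via the holomorphically extended orbit map. First I would fix a non-zero $K$-finite vector $v \in V_K$ and, using the Casselman--Wallach globalisation $(\pi^\infty, V^\infty)$ as in the proof of Theorem~\ref{thm: crown domain}, obtain the holomorphic extension $\widetilde\gamma_v : \widetilde D \longrightarrow V^\infty$, which descends to a map on $D = \widetilde D/K_{\C}$ that is $G$-equivariant. The key point is that $\widetilde\gamma_v$ is a \emph{holomorphic} $V^\infty$-valued function on the Stein-like domain $\widetilde D$, so one can pair it with suitable continuous functionals to produce holomorphic scalar-valued functions on $\widetilde D$ which restrict to matrix coefficients on $G$.

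Next I would set up the properness criterion. Recall that $G$ acts properly on $D$ if and only if for every compact $C \subset D$ the set $\{g \in G : gC \cap C \neq \emptyset\}$ is compact, equivalently if for every sequence $g_n \to \infty$ in $G$ and every $z \in D$, the sequence $g_n \cdot z$ eventually leaves every compact subset of $D$. The strategy is to argue by contradiction: suppose $g_n \to \infty$ in $G$ but $g_n \cdot z \to z_0 \in D$ along a subsequence, for some $z \in D$. Lifting to $\widetilde D$, write $z = \widetilde z K_{\C}$ and choose $h_n \in K_{\C}$ with $g_n \widetilde z h_n \to \widetilde z_0$. By continuity of the extended orbit map, $\widetilde\gamma_v(g_n \widetilde z h_n) \to \widetilde\gamma_v(\widetilde z_0)$ in $V^\infty$, hence also in $V$. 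Now use $G$-equivariance: $\widetilde\gamma_v(g_n \widetilde z h_n) = \pi(g_n)\,\widetilde\gamma_v(\widetilde z h_n)$. Since $h_n$ ranges over the compact group $K$ intersected appropriately (or, more carefully, since $\widetilde z h_n$ stays in a compact subset of $\widetilde D$ as the $h_n$ can be taken in a convergent-subsequence regime), the vectors $\widetilde\gamma_v(\widetilde z h_n)$ lie in a relatively compact subset of $V^\infty$, and after passing to a further subsequence converge to some $w \in V^\infty$ with $w \neq 0$ (as $\widetilde\gamma_v$ is nowhere zero, being the orbit map of a non-zero vector extended holomorphically). Then $\pi(g_n) w \to \widetilde\gamma_v(\widetilde z_0) \neq 0$ in $V$.

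The contradiction now comes from vanishing of matrix coefficients. Pick $\xi \in V^*$ with $\langle \xi, \widetilde\gamma_v(\widetilde z_0)\rangle \neq 0$. On the one hand $\langle \xi, \pi(g_n) w\rangle \to \langle \xi, \widetilde\gamma_v(\widetilde z_0)\rangle \neq 0$; on the other hand, since $w$ can be approximated by $K$-finite (hence matrix-coefficient-controlled) vectors, or directly since by Proposition~\ref{prop: vanishing at infty} all matrix coefficients $g \mapsto \langle \xi, \pi(g) w\rangle$ vanish at infinity and $g_n \to \infty$, we get $\langle \xi, \pi(g_n) w\rangle \to 0$ --- a contradiction. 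This shows no such sequence exists, hence $G$ acts properly on $D$. I expect the main obstacle to be the careful handling of the $K_{\C}$-ambiguity: one must ensure that when lifting $g_n \cdot z \to z_0$ from $D = \widetilde D/K_{\C}$ to $\widetilde D$, the correcting elements $h_n \in K_{\C}$ can be chosen in a relatively compact set so that $\widetilde z h_n$ stays in a compact subset of $\widetilde D$ and $\widetilde\gamma_v(\widetilde z h_n)$ remains bounded away from $0$ in $V$. This requires using that $K_{\C}/K$ is handled by the structure of the fibres of $q$ and that the extended orbit map is $K_{\C}$-equivariant on the right in the appropriate sense; one reduces to a genuinely compact set of $h_n$'s (essentially in $K$) by absorbing the non-compact part of the $K_{\C}$-fibre into a reparametrisation of $\widetilde z$ within $\widetilde D$, using that $\widetilde D$ is $K_{\C}$-invariant. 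Once this bookkeeping is done, the vanishing-of-matrix-coefficients argument closes the proof cleanly.
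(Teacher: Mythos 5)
Your overall strategy --- contradiction plus vanishing of matrix coefficients --- is the right one, but the step you yourself flag as ``the main obstacle'' is exactly where the proof lives, and your proposed fix does not close it. From $g_n\widetilde z h_n\to\widetilde w$ with $g_n\to\infty$ you cannot conclude that the $h_n\in K_{\C}$ (or the points $\widetilde z h_n$) stay in a relatively compact set: the fibres of $q:\widetilde D\to D$ are non-compact $K_{\C}$-orbits, and nothing prevents $h_n\to\infty$ in $K_{\C}$ while the products converge. ``Absorbing the non-compact part into a reparametrisation of $\widetilde z$'' just moves the unboundedness into the new base point, which may then escape every compact subset of $\widetilde D$; so the claim that $\widetilde\gamma_v(\widetilde z h_n)$ lies in a relatively compact subset of $V\setminus\{0\}$ is unjustified, and without it you have no non-zero limit vector $w$ to feed into Proposition~\ref{prop: vanishing at infty}. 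The paper's proof does \emph{not} bound the $k_n$ at all. Instead it uses two ingredients you omit: (a) uniform boundedness gives the two-sided estimate $\tfrac1C\norm{\pi(\tilde z_nk_n)v}\le\norm{\pi(g_n\tilde z_nk_n)v}\le C\norm{\pi(\tilde z_nk_n)v}$, which pins $\norm{\pi(\tilde z_nk_n)v}$ between positive constants since the right-hand sequence converges to $\pi(\tilde w)v\neq0$; and (b) the $K$-finiteness of $v$ makes the span $W$ of its $K$-translates a finite-dimensional $K_{\C}$-module, so that $\pi(\tilde z_n)|_W\to\pi(\tilde z)|_W$ forces $\pi(k_n)v$, and hence $\pi(\tilde z_nk_n)v$, into a compact subset of $V\setminus\{0\}$ even though $k_n$ itself may be unbounded. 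The contradiction is then obtained from Lemma~\ref{lemma: proper action} (properness of $G$ on $V\setminus\{0\}$), which packages the matrix-coefficient argument you sketch at the end.

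Two smaller points. First, your properness criterion is not correct as stated: properness of the action is \emph{not} equivalent to ``for every fixed $z$ and every $g_n\to\infty$, $g_nz$ leaves every compact set'' (properness of all orbit maps is strictly weaker); the negation of properness only yields sequences $z_n\to z$ with $g_nz_n\to w$, and the proof must be run with a varying base point $z_n$, as the paper does. Second, the nowhere-vanishing of $\widetilde\gamma_v$ on $\widetilde D$, which you use parenthetically, does require the identity-theorem argument of \cite[Theorem 4.3]{kroetz-opdam08}; it is worth stating explicitly since a pointwise non-vanishing function can still have zero as a limit value along a non-precompact sequence --- another reason the compactness issue above cannot be waved away.
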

The first crucial step is the following lemma, which allows us to prove this theorem, not just for irreducible uniformly bounded Hilbert representations but also for irreducible admissible uniformly bounded representations on \emph{ucus} Banach spaces.  This extends \cite[Lemma $4.2$]{kroetz08} proved there in the unitary case.
\begin{lemma}
\label{lemma: proper action}
Let $(\pi, V)$ be an infinite-dimensional irreducible admissible uniformly bounded representation of $G$ on a ucus Banach space. Then $G$ acts properly on $V\setminus \{0\}$.
\end{lemma}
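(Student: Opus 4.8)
The plan is to exploit the properness on $V\setminus\{0\}$ as an orbit-closedness statement combined with the vanishing at infinity from Proposition~\ref{prop: vanishing at infty}. Concretely, to show $G$ acts properly on $V\setminus\{0\}$ it suffices, for any compact subsets $A,B\subset V\setminus\{0\}$, to show that the set $G_{A,B}:=\{g\in G\mid \pi(g)A\cap B\neq\emptyset\}$ is relatively compact in $G$. Suppose not: then there is a sequence $g_n\to\infty$ in $G$ (i.e. leaving every compact set) and vectors $v_n\in A$, $w_n\in B$ with $\pi(g_n)v_n=w_n$. Passing to subsequences and using compactness, we may assume $v_n\to v$ and $w_n\to w$ with $v,w\neq 0$. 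Using the uniform bound $\norm{\pi(g)}_{op}\le C$ we get $\norm{\pi(g_n)v - w_n}=\norm{\pi(g_n)(v-v_n)}\le C\norm{v-v_n}\to 0$, so in fact $\pi(g_n)v\to w$.

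The next step is to derive a contradiction from $\pi(g_n)v\to w\neq 0$ with $g_n\to\infty$. First reduce to $K$-finite vectors: since $V_K$ is dense in $V$ and the operators $\pi(g_n)$ are uniformly bounded, choosing $v'\in V_K$ with $\norm{v-v'}$ small and correspondingly $w':=\lim\pi(g_n)v'$ (along a further subsequence, using that $\{\pi(g_n)v'\}$ stays in the ball of radius $C\norm{v'}$ and — after replacing $V$ by the equivalent isometric norm of the Remark — invoking reflexivity of ucus spaces to extract a weakly convergent subsequence, then upgrading to norm convergence), we may assume $v\in V_K$ and $w\neq 0$. Then pair against a functional: pick $\ell\in V^*$, which we may take $K_\C$-finite on the left (the $K$-finite vectors of the dual of the Casselman--Wallach module are dense, and $V^*$ is again ucus), with $\ell(w)\neq 0$. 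The matrix coefficient $g\mapsto \ell(\pi(g)v)$ is then a genuine matrix coefficient of $\pi$ between $K$-finite vectors, hence vanishes at infinity by Proposition~\ref{prop: vanishing at infty}. But $g_n\to\infty$ forces $\ell(\pi(g_n)v)\to 0$, contradicting $\ell(\pi(g_n)v)\to\ell(w)\neq 0$.

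To make the reduction to $K$-finite $v$ clean, it is easier to argue directly with the strong operator topology: from $\pi(g_n)v\to w$ one wants $\pi(g_n)u\to$ (something) for a dense set of $u$. Here the mechanism is the Cauchy--Schwarz-type estimate available on ucus spaces, or more simply: fix $K$-finite $\ell$ and $K$-finite $v$ as above and note $\ell(\pi(g_n)v)\to 0$ by vanishing at infinity regardless; then approximate $w$ in norm by such an $\ell$-pairing argument. Since $v$ in the contradiction hypothesis need only be some fixed nonzero vector, density of $V_K$ together with uniform boundedness lets us replace it by a nonzero $K$-finite vector at the cost of moving $w$ by an arbitrarily small amount — and $w\ne 0$ is stable under small perturbations. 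So after this replacement the matrix-coefficient argument of the previous paragraph applies verbatim.

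The main obstacle is the passage from weak convergence (which reflexivity of ucus Banach spaces gives for free on bounded sequences) to the norm statements used above, and the care needed in choosing $K$-finite functionals $\ell\in V^*$ separating $w$ from $0$: one must know that such $K$-finite functionals exist and are plentiful, which follows because $\pi$ is admissible, so $V_K$ and $(V^*)_K$ are in perfect duality as Harish-Chandra modules, and $(V^*)_K$ separates points of $\overline{V_K}=V$. I expect the write-up to spend most of its effort here; once $\ell(\pi(g_n)v)$ is recognised as a matrix coefficient between $K$-finite vectors, vanishing at infinity closes the argument immediately.
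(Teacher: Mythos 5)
Your first paragraph is exactly the paper's proof: assume $W_G$ is non-compact, extract $g_n\to\infty$, $v_n\to v$, $\pi(g_n)v_n\to w$ with $v,w\neq 0$, use uniform boundedness to get $\pi(g_n)v\to w$, and contradict vanishing of matrix coefficients at infinity. Everything after that is an unnecessary detour: Proposition~\ref{prop: vanishing at infty} asserts (and its sources, Borel--Wallach and Shalom/BFGM, prove) that \emph{all} matrix coefficients $g\mapsto\langle\pi(g)v,f\rangle$ with $v\in V$, $f\in V^*$ arbitrary vanish at infinity, so you may simply pick any $f\in V^*$ with $f(w)\neq 0$ and finish; no reduction to $K$-finite vectors or $K$-finite functionals is needed. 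That detour also contains an unjustified step --- extracting a weakly convergent subsequence of $\pi(g_n)v'$ and ``upgrading to norm convergence'' does not work in general --- though a correct version of the reduction can be run with $\liminf$ estimates ($|\ell(\pi(g_n)v')|\geq|\ell(\pi(g_n)v)|-C\norm{\ell}\norm{v-v'}$). Since the gap lies entirely in a superfluous branch of the argument, your proof is correct once that branch is deleted.
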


\begin{proof}
The $G$-action on $V\setminus \{0\}$ is proper if for every compact subset $W$ of $V\setminus \{0\}$, the set $W_G:=\{g\in G\,|\, \pi(g)W\cap W\neq \emptyset\}$ is also compact. Suppose $W_G$ is not compact for such a compact $W$. Then there exist sequences $(g_n)_{n\in\N}$ in $W_G$ and  $(v_n)_{n\in\N}$ in $W$ such that 
$\displaystyle{\lim_{n\to\infty} g_n=\infty}$ but
$\pi(g_n)v_n\in W$ for all $n\in \N$. By the compactness of $W$ we may assume, by going to subsequences if necessary, that there exist $v,v'\in W$ such that
$\displaystyle{\lim_{n\to\infty} v_n=v}$ and 
$\displaystyle{\lim_{n\to\infty} \pi(g_n)v_n=v'}$. Now, since $\pi$ is uniformly bounded, we have that 
$\norm{\pi(g_n)v_n-\pi(g_n)v}\leq C\norm{v_n-v}$, and so it follows that $\displaystyle{\lim_{n\to\infty} \pi(g_n)v=v'}$. Since $v'\neq 0$, there is an $f\in V^{\ast}$  such that $\langle v', f\rangle \neq 0$. But then  $\displaystyle{\lim_{n\to\infty}\langle\pi(g_n)v, f \rangle=\langle v', f\rangle \neq 0}$ - a contradiction to the vanishing of all the matrix coefficients at infinity, guaranteed by Proposition \ref{prop: vanishing at infty}. This concludes the proof. 
\end{proof}

Now we are in a position to prove Theorem \ref{thm: proper action}. This proof crucially uses the fact that the representation is uniformly bounded, thus extending \cite[Theorem 4.3]{kroetz-opdam08}.

\emph{Proof of Theorem 2}: 
We know that the orbit map $\gamma_v:G\to V$ extends to a $G$-equivariant holomorphic map $\widetilde D\to V$. Then using the identity theorem for holomorphic maps we can deduce, as in the proof of \cite[Theorem 4.3]{kroetz-opdam08}, that  $\pi(\tilde z)v\neq0$ for all $\tilde z \in \widetilde D$.

Now suppose that $G$ does not act properly on $D$. Then there exist sequences $(g_n)_{n\in\N}$ in $G$ with 
$g_n\to\infty$,  $(z_n)_{n\in\N}$ in $D$ such that $z_n\to z\in D$
and $g_nz_n\to w\in D$. We choose pre-images $\tilde z_n, \tilde z, \tilde w$ in $\widetilde D$ of $z_n, z, w$, respectively, such that $\tilde z_n\to \tilde z$. Then a sequence $(k_n)_{n\in\N}$ in $K_{\C}$ can also be chosen so that $g_n\tilde z_nk_n\to\tilde w$, from which it follows that $\pi(g_n\tilde z_n k_n)v\to \pi(\tilde w)v$. For a uniformly bounded representation, we note that sub-multiplicativity of the operator norm gives us also a bound from below:
\[
\frac{1}{C} \leq \norm{\pi(g)} \leq C \qquad \forall g\in G,
\]
where $C$ is as in \eqref{eqn: uniform boundedness}. So it follows that $\frac{1}{C}\norm{\pi(\tilde z_nk_n)v}\leq \norm{\pi(g_n\tilde z_nk_n)v}\leq C\norm{\pi(\tilde z_nk_n)v}$. Then since $\pi(\tilde z)v\neq0 \;\forall \tilde z \in \widetilde D$,  $\exists\; \alpha_1 ,\alpha_2>0$ such that $\alpha_1<\norm{\pi(\tilde z_nk_n)v}<\alpha_2$, for all $n\in\N$. Let $W$ denote the linear span of the $K$-translates of $v$. From these considerations and the fact that $\tilde z_n\to \tilde z$, we obtain that $\pi(\tilde z_n)_{|_{W}}-\pi(\tilde z)_{|_{W}}\to 0$, and hence that $\beta_1<\norm{\pi(k_n)v}<\beta_2$ for some constants $\beta_1, \beta_2>0$. As $v\in V_K$,  $W$ is finite-dimensional, and so the closure $U$ of the sequences $(\pi(g_n\tilde z_n k_n)v)_{n\in\N}$ and $(\pi(\tilde z_n k_n)v)_{n\in\N}$ in $V$ is a compact subset of $V\setminus \{0\}$. But then the set $U_G=\{g\in G\;|\; \pi(g)U\cap U\neq\emptyset\}$ contains the unbounded sequence $(g_n)_{n\in\N}$, and this is a contradiction to Lemma \ref{lemma: proper action}, thus proving the theorem. \qed

Now we are in a position to use the geometry of the situation according to if $G$ is Hermitian or not Hermitian, along with Theorem \ref{thm: crown domain} and Theorem \ref{thm: proper action} to determine the domains of holomorphy. First, suppose $G$ is non-Hermitian. Then as in \cite[Lemma 4.4]{kroetz08}, using an  $A_2$-reduction, we obtain that for any root $\alpha \in \Sigma$ and $Y \in \g_{\alpha}$, there exists an $m\in M=Z_K(\a)$ such that $\Ad(m)Y=-Y$. This, together with a certain precise description of the boundary of $\widetilde{\Xi}$ in the $SL(2,\R)$ case, 
leads, as in \cite[Theorem 4.1]{kroetz08}, to the geometric result that $D\subset \Xi$ for any $G$-invariant domain $D$ such that $\mathbb X \subset D \subset \mathbb X_{\C}$ on which $G$ acts properly. Therefore, we have that $\widetilde D/K_{\C}$ is contained in the crown domain $\Xi$. Now suppose $\widetilde D$ is a maximal $G\times K_{\C}$-invariant domain to which $\gamma_v$ extends holomorphically. Observe that by \autoref{thm: crown domain}, 
$\widetilde D$ necessarily contains the domain $\widetilde\Xi$.  \autoref{thm: proper action} then tells us that $G$ acts properly on $\widetilde D/K_{\C}$, and so it follows that it is equal to $\Xi$. This yields our main result in the non-Hermitian case.
 
\begin{theorem}
\label{thm: non-Hermitian}
Let $G$ be a connected real simple Lie group which is not Hermitian. If $(\pi, V)$ is  an infinite-dimensional irreducible admissible uniformly bounded representation of $G$ on a ucus Banach space, then the corresponding domain of holomorphy $D_{\pi}=\widetilde{\Xi}$. In particular, this is the case for all infinite-dimensional irreducible uniformly bounded Hilbert representations of $G$.
\end{theorem}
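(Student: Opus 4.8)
The plan is to assemble the ingredients already developed in the preceding sections, for which almost all the work is done; what remains is a bookkeeping argument. First I would invoke Theorem~\ref{thm: crown domain}, which guarantees that for a non-zero $K$-finite vector $v$ the orbit map $\gamma_v$ extends holomorphically to the $G\times K_{\C}$-invariant domain $\widetilde{\Xi}=G\exp(i\Omega)K_{\C}$; hence any \emph{maximal} $G\times K_{\C}$-invariant domain $\widetilde{D}\subset G_{\C}$ to which $\gamma_v$ extends holomorphically satisfies $\widetilde{\Xi}\subseteq\widetilde{D}$, and in particular it is non-empty and contains the neighbourhood of $G$ needed for $v\in V_K\subseteq V^\omega$. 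The existence of a unique such maximal $\widetilde D$ follows by taking the union of all $G\times K_{\C}$-invariant domains to which $\gamma_v$ extends holomorphically and applying the identity theorem for holomorphic $V$-valued maps to glue the extensions consistently (they must agree on overlaps since any two agree with the original $\gamma_v$ on $G$, which has non-empty interior in each connected component after complexification, or one works componentwise on the connected domain $\widetilde{\Xi}$ and notes $\widetilde D$ is connected as it contains the connected set $\widetilde\Xi$); this is the content of the existence claim in the Conjecture and is standard.

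Next I would feed this $\widetilde D$ into Theorem~\ref{thm: proper action}: since $(\pi,V)$ is infinite-dimensional, uniformly bounded, irreducible (hence admissible, by \cite[Theorem~5.2, Chapter~IV]{borel-wallach} in the Hilbert case, or by hypothesis in the ucus case), and $G$ is simple and non-compact, the group $G$ acts \emph{properly} on $D=\widetilde D/K_{\C}$. Moreover $\mathbb X=G/K\subseteq \widetilde\Xi/K_{\C}=\Xi\subseteq D\subseteq \mathbb X_{\C}=G_{\C}/K_{\C}$, so $D$ is a $G$-invariant domain sitting between $\mathbb X$ and $\mathbb X_{\C}$ on which $G$ acts properly. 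Now the non-Hermitian geometric input enters: as recalled in the paragraph preceding the theorem (following \cite[Lemma~4.4, Theorem~4.1]{kroetz08}), the $A_2$-reduction produces, for each restricted root $\alpha\in\Sigma$ and each $Y\in\g_\alpha$, an element $m\in M=Z_K(\a)$ with $\Ad(m)Y=-Y$; combined with the $SL(2,\R)$ description of $\partial\widetilde\Xi$, this forces $D\subseteq\Xi$ for \emph{any} such properly-acting $G$-invariant domain. Hence $\Xi\subseteq D\subseteq\Xi$, i.e. $D=\Xi$, equivalently $\widetilde D=\widetilde\Xi$; since this holds for the maximal such domain, we conclude $D_\pi=\widetilde\Xi$. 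Finally, the ``in particular'' clause is immediate: a non-trivial (equivalently, since $\pi$ is infinite-dimensional, automatically non-trivial) irreducible uniformly bounded Hilbert representation is admissible by the cited theorem of Borel--Wallach, and a Hilbert space is ucus, so the general statement applies verbatim.

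The main obstacle — or rather, the only place where anything substantive happens — is entirely off-loaded onto the earlier results: it is Theorem~\ref{thm: proper action} (properness of the $G$-action on $D$), which in turn rests on Lemma~\ref{lemma: proper action} (properness on $V\setminus\{0\}$) and hence on Proposition~\ref{prop: vanishing at infty} (vanishing of matrix coefficients at infinity), the latter being where uniform boundedness and the ucus hypothesis are genuinely used, via Shalom's theorem \cite[Theorem~9.1]{bfgm07} and the equivalent-norm Remark. Given those, the proof of Theorem~\ref{thm: non-Hermitian} is a short syllogism: ``orbit map extends to $\widetilde\Xi$'' $+$ ``maximal extension domain mod $K_{\C}$ carries a proper $G$-action'' $+$ ``in the non-Hermitian case a properly-acting $G$-invariant domain between $\mathbb X$ and $\mathbb X_{\C}$ cannot escape the crown'' $\Longrightarrow$ $D_\pi=\widetilde\Xi$. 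One should be slightly careful only about the well-definedness and uniqueness of $D_\pi$ (handled by the identity theorem as above) and about checking that $\widetilde\Xi/K_{\C}=\Xi$ genuinely contains $\mathbb X$ so that the geometric dichotomy of Section~\ref{Section: complex geometry} (in particular $D\cap\partial_{\textrm{ell}}\Xi=\emptyset$ and the resulting confinement $D\subseteq\Xi$) applies — but both are recorded in the complex-geometric preliminaries and require no new argument.
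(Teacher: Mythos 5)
Your proposal is correct and follows essentially the same route as the paper: Theorem~\ref{thm: crown domain} gives $\widetilde\Xi\subseteq\widetilde D$, Theorem~\ref{thm: proper action} gives properness of the $G$-action on $\widetilde D/K_{\C}$, and the non-Hermitian geometric confinement from Kr\"otz's $A_2$-reduction forces $\widetilde D/K_{\C}\subseteq\Xi$, whence $\widetilde D=\widetilde\Xi$. Your added care about the existence and uniqueness of the maximal domain via the identity theorem, and the explicit justification of the ``in particular'' clause via Borel--Wallach, are points the paper treats more tersely but are consistent with its argument.
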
 

We conclude by considering the case when $G$ is Hermitian. Here we use the notation set up in Section \ref{Section: complex geometry}. Suppose $(\pi, V)$ is  a highest (respectively, lowest) weight infinite-dimensional irreducible admissible uniformly bounded $G$-representation on a ucus Banach space $V$, i.e. $\mathsf{P}^{\pm}$ acts finitely on $V_K$. Using this, and Theorem \ref{thm: crown domain} as well as the fact that $V_K$ is also naturally a $K_{\C}$-module, we can conclude as in  \cite{kroetz08} that, for $0 \neq v \in V_K$, the orbit map $\gamma_v$ extends holomorphically to the $G\times K_{\C}$-invariant domain $GK_{\C}\mathsf{P}^{\pm}=\widetilde \Xi^{\pm}$. 
Now since $G$ is 
Hermitian, it is in fact true that if a $G$-invariant domain $D$ such that $\mathbb X \subset D \subset \mathbb X_{\C}$ admits a proper $G$-action, then either $D\subset \widetilde \Xi^+/K_{\C}$ or $D \subset \widetilde \Xi^-/K_{\C}$. Thus we can conclude using Theorem \ref{thm: proper action} that  
\begin{proposition}
For a Hermitian group $G$, we have that the domains of holomorphy $\widetilde D_{\pi}=\widetilde \Xi^{\pm}$ depending on whether $(\pi, V)$ is of highest weight or lowest weight, respectively.  
\end{proposition}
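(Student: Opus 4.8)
The plan is to combine the two ingredients already assembled in the paper: the lower bound $D_\pi \supseteq \widetilde\Xi^\pm$ coming from Theorem \ref{thm: crown domain} together with the finite action of $\mathsf P^\pm$ on $V_K$, and the upper bound coming from properness (Theorem \ref{thm: proper action}) together with the Hermitian geometric dichotomy. First I would make the lower bound precise. Since $\pi$ is, say, a highest weight representation, $\mathsf P^+$ acts locally finitely on $V_K$ (indeed $\p_\C^+$ acts nilpotently on each $K$-type because of the grading of $\p_\C$ under the center of $\k_\C$). Hence for $0\neq v\in V_K$ the orbit map $\gamma_v$, which by Theorem \ref{thm: crown domain} already extends holomorphically to $\widetilde\Xi = G\exp(i\Omega)K_\C$, extends further to $\widetilde\Xi^+ = GK_\C\mathsf P^+$: on the $\mathsf P^+$-direction the extension is given by the (polynomial, hence entire) action of the nilpotent operators $d\pi(X)$, $X\in\p_\C^+$, and on the $K_\C$-direction by the already-available $K_\C$-module structure on $V_K$; the two extensions agree on overlaps by the identity theorem since they both restrict to $\gamma_v$ on $G$. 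This reproduces the argument of \cite{kroetz08} verbatim and uses only admissibility, not unitarity.

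Next I would install the upper bound. Let $\widetilde D$ be a maximal $G\times K_\C$-invariant domain to which $\gamma_v$ extends holomorphically, and put $D=\widetilde D/K_\C$. By Theorem \ref{thm: proper action}, $G$ acts properly on $D$, and $\mathbb X\subseteq D\subseteq \mathbb X_\C$ since $\widetilde D\supseteq \widetilde\Xi\supseteq G$. Now invoke the Hermitian structure theory: for a Hermitian $G$, any $G$-invariant domain $D$ with $\mathbb X\subseteq D\subseteq \mathbb X_\C$ on which $G$ acts properly satisfies either $D\subseteq \widetilde\Xi^+/K_\C$ or $D\subseteq\widetilde\Xi^-/K_\C$. (This is the Hermitian analogue of the statement $D\subseteq\Xi$ in the non-Hermitian case; it follows from the classification of the maximal proper-action extensions of the crown domain in the Hermitian case, where the unipotent directions of $\partial\Xi$ split into two families giving precisely $\Xi^\pm$, together with the fact that the stabilizer of any boundary point outside these is non-compact.) Since in our situation $\gamma_v$ already extends past $\Xi$ into $\widetilde\Xi^+$, we are in the "$+$" alternative, so $D\subseteq \widetilde\Xi^+/K_\C$, i.e. $\widetilde D\subseteq \widetilde\Xi^+$. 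Combined with $\widetilde D\supseteq \widetilde\Xi^+$ from the previous paragraph and maximality, we get $\widetilde D=\widetilde\Xi^+$, and symmetrically $\widetilde D=\widetilde\Xi^-$ in the lowest weight case. To finish I would note that $\widetilde\Xi^\pm$ is itself $G\times K_\C$-invariant and that $\gamma_v$ extends there, so it is the unique maximal such domain, i.e. $D_\pi=\widetilde\Xi^\pm$ independently of the choice of $0\neq v\in V_K$ (the independence being the standard $\mathcal U(\g_\C)$-cyclicity argument already indicated in the introduction).

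The step I expect to be the main obstacle — or at least the one requiring the most care — is selecting the alternative in the Hermitian dichotomy and checking that the "highest weight" hypothesis is exactly what forces the "$+$" branch rather than the "$-$" branch. Concretely, one must verify that the holomorphic extension of $\gamma_v$ genuinely leaves the crown domain $\Xi$ in the $\p_\C^+$-direction and not merely formally; this is where the local finiteness of the $\mathsf P^+$-action is used, and where one should confirm the orientation conventions so that highest weight pairs with $\widetilde\Xi^+ = GK_\C\mathsf P^+$ and not with $\widetilde\Xi^-$. A secondary point is to make sure that the properness input from Theorem \ref{thm: proper action} applies: that theorem is stated for any maximal $G\times K_\C$-invariant domain of holomorphy of $\gamma_v$, so it applies directly to $\widetilde D$, but one should note that $D=\widetilde D/K_\C$ is indeed a domain in $\mathbb X_\C$ containing $\mathbb X$, which follows from $\widetilde D\supseteq\widetilde\Xi$ and the fibration $q$. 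The remaining geometric input (the Hermitian trichotomy of proper-action extensions of $\Xi$) is cited from the crown-domain literature rather than proved here, exactly as the corresponding non-Hermitian fact $D\subseteq\Xi$ was.
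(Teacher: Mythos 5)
Your proposal is correct and follows essentially the same route as the paper: the lower bound $\widetilde D\supseteq\widetilde\Xi^{\pm}$ from Theorem~\ref{thm: crown domain} together with the locally finite $\mathsf{P}^{\pm}$-action on $V_K$ and its $K_{\C}$-module structure (as in Kr\"otz), and the upper bound from Theorem~\ref{thm: proper action} combined with the cited Hermitian dichotomy for proper $G$-invariant domains between $\mathbb X$ and $\mathbb X_{\C}$. The paper's own argument is simply a terser version of yours, including the same appeal to the crown-domain literature for the geometric dichotomy and the same branch selection.
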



\bibliography{Bibliography}
\bibliographystyle{amsalpha}


\end{document}